\documentclass[12pt]{amsart}
\usepackage{amsmath,amssymb}
\usepackage{amsfonts}
\usepackage{amsthm}
\usepackage{latexsym}
\usepackage{graphicx}


\def\p{\partial}

\def\ii{\sqrt{-1}}

\def\R{\mathbb{R}}

\def\vv<#1>{\langle#1\rangle}
\def\sph{\mathbb{S}}

\def\XXint#1#2{\setbox0=\hbox{$#1{#2}{\int}$}{#2}\kern-.5\wd0 }

\def\XXint#1#2#3{{\setbox0=\hbox{$#1{#2#3}{\int}$}
     \vcenter{\hbox{$#2#3$}}\kern-.5\wd0}}



\def\vv<#1>{\langle#1\rangle}
\newtheorem{thm}{Theorem}[section]

\newtheorem{lem}{Lemma}[section]

\newtheorem{cor}{Corollary}[section]
\theoremstyle{definition}

\theoremstyle{remark}

\numberwithin{equation}{section}

\begin{document}

\title{Heat kernel recurrence on space forms and applications}

\author{Chengjie Yu$^1$}
\address{Department of Mathematics, Shantou University, Shantou, Guangdong, 515063, China}
\email{cjyu@stu.edu.cn}
\author{Feifei Zhao}
\address{Department of Mathematics, Shantou University, Shantou, Guangdong, 515063, China}
\email{14ffzhao@stu.edu.cn}
\thanks{$^1$Research partially supported by an NSF project of China with contract no. 11571215.}

\renewcommand{\subjclassname}{%
  \textup{2010} Mathematics Subject Classification}
\subjclass[2010]{Primary 35K05; Secondary 53C44}
\date{}
\keywords{Heat equation, Heat kernel, recurrence relation}
\begin{abstract}
In this paper, we first give a direct proof for two recurrence relations of the heat kernels for hyperbolic spaces in \cite{DM}. Then, by similar computation, we give two similar recurrence relations of the heat kernels for spheres. Finally, as an application, we compute the diagonal of heat kernels for odd dimensional hyperbolic spaces and the heat trace asymptotic expansions for odd dimensional spheres.
\end{abstract}
\maketitle\markboth{Yu \& Zhao}{Heat kernel recurrence}
\section{Introduction}
Let $K_n(t,r(x,y))$ be the heat kernel of the hyperbolic space $\mathbb H^n$. In \cite{DM}, the authors obtained the following two recurrence relations:
\begin{equation}\label{eq-h-re-s2}
K_{n+2}=-\frac{e^{-nt}}{2\pi\sinh r}\p_r K_n,
\end{equation}
and
\begin{equation}\label{eq-h-re-s1}
K_n(t,r)=\sqrt 2e^{\frac{(2n-1)t}{4}}\int_r^\infty \frac{K_{n+1}(t,\rho)\sinh\rho}{(\cosh\rho-\cosh r)^\frac12}d\rho,
\end{equation}
by the expression of $K_n$ computed by using Selberg's transform. Another method of obtaining the expressions of heat kernels on hyperbolic spaces using wave kernels can be found in \cite{GN}. The recurrence relations \eqref{eq-h-re-s2} and \eqref{eq-h-re-s1} are useful in obtaining heat kernel estimates on hyperbolic spaces. For examples, in \cite{DM}, the authors obtained sharp upper and lower bound of the heat kernels for hyperbolic spaces, and in \cite{YZ}, we obtained optimal Li-Yau gradient estimate for hyperbolic spaces by using \eqref{eq-h-re-s2} and \eqref{eq-h-re-s1}.

Let $H_n(t,r(x,y))$ be the heat kernel of $\R^n$. By the expression
\begin{equation}
H_n(t,r)=\frac{1}{(4\pi t)^\frac n2}e^{-\frac{r^2}{4t}}
\end{equation}
of $H_n$, it is not hard to see that two similar recurrence relations:
\begin{equation}\label{eq-e-re-s2}
H_{n+2}=-\frac{1}{2\pi r}\p_r H_n,
\end{equation}
and
\begin{equation}\label{eq-e-re-s1}
H_n(t,r)= 2\int_r^\infty \frac{H_{n+1}(t,\rho)\rho}{(\rho^2- r^2)^\frac12}d\rho
\end{equation}
are also true on $\R^n$. According to this, a natural question is: are there any similar recurrence relations on spheres? The same question was asked in \cite{Na} where the author only obtained asymptotic recurrence relations of the heat kernels on spheres for short distance. In this paper, we first give a direct proof of the recurrence relations \eqref{eq-h-re-s2} and \eqref{eq-h-re-s1} on hyperbolic spaces without using the expressions of the heat kernels. Then, by  similar computation, we give an affirmative answer to the question. More precisely, we obtain the following two recurrence relations of heat kernels on spheres.
\begin{thm}\label{thm-main}Let $\kappa_n(t,r(x,y))$ be the heat kernel of $\sph^n$ for $n=1,2,\cdots$. Then
\begin{enumerate}
\item \begin{equation}\label{eq-s-re-s2}
\kappa_{n+2}=-\frac{e^{nt}}{2\pi\sin r}\p_r\kappa_n
\end{equation}
for $n=1,2,\cdots$
\item
\begin{equation}\label{eq-s-re-s1}
\begin{split}
\kappa_n=&\sqrt 2e^{-\frac{2n-1}{4}t}\int_{r}^\pi\frac{\kappa_{n+1}(t,\rho)\sin\rho}{(\cos r-\cos \rho)^\frac12}d\rho\\
&+(n-1)2^{n-1}\omega_{n-2}\int_0^te^{-\frac{2n-1}{4}s}\kappa_{n+1}(s,\pi)\int_0^\pi\sin^{n-1}\left(\frac\rho 2\right)\cos^{n-2}\left(\frac\rho 2\right)\\
&\int_0^\pi\kappa_{n}(t-s,\arccos(\cos r\cos\rho+\sin r\sin\rho\cos\theta))\sin^{n-2}\theta d\theta d\rho ds
\end{split}
\end{equation}
for $n=2,3,\cdots$.
\end{enumerate}
\end{thm}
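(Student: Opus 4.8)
The plan is to mimic the Euclidean derivation, but carefully track the two distinctive features of the sphere: the finite range $[0,\pi]$ of the distance variable, and the periodicity/boundary behavior of $\kappa_n$ at the antipode $r=\pi$. The two relations should follow from the same underlying mechanism, namely that heat kernels in neighboring dimensions are related by Sonine-type integral transforms. For part (1), I would proceed directly: set $\kappa_{n+2}$ equal to the proposed expression $-\frac{e^{nt}}{2\pi\sin r}\p_r\kappa_n$ and verify that the right-hand side satisfies the heat equation on $\sph^{n+2}$ together with the correct initial condition, so that uniqueness forces equality. The radial Laplacian on $\sph^m$ acting on functions of the distance $r$ is $\Delta_m = \p_r^2 + (m-1)\cot r\,\p_r$, and the key algebraic step is to check that if $u=\kappa_n$ solves $\p_t u=\Delta_n u$, then $v=-\frac{e^{nt}}{2\pi\sin r}\p_r u$ solves $\p_t v=\Delta_{n+2}v$. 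This is a computation with the commutator of $\p_r$ and the radial Laplacian, using the identity relating $\Delta_{n+2}$ to $\Delta_n$ after conjugation by $\frac{1}{\sin r}$; I would differentiate the equation $\p_t u=\Delta_n u$ in $r$ and reorganize the $\cot r$ and $\csc r$ terms. One must also confirm the initial-time behavior: as $t\to 0^+$, $-\frac{1}{2\pi\sin r}\p_r\kappa_n$ concentrates at $r=0$ with the correct mass to be the heat kernel on $\sph^{n+2}$, which is exactly the normalization encoded in the Euclidean prototype \eqref{eq-e-re-s2}.

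For part (2), the natural approach is to \emph{invert} the differential recurrence from part (1), writing $\kappa_n$ as an integral of $\kappa_{n+1}$ over the distance variable. Starting from the relation in part (1) (with $n$ shifted to $n-1$), I would multiply by $\sin r$ and integrate $\p_r\kappa_n$ against a kernel chosen so that, after the substitution $u=\cos r$, the integral transform matches the Euclidean Sonine formula \eqref{eq-e-re-s1} in the $r\to 0$ limit. The factor $(\cos r-\cos\rho)^{-1/2}$ is the spherical analogue of $(\rho^2-r^2)^{-1/2}$, and the exponential weight $e^{-\frac{2n-1}{4}t}$ is fixed by matching the $t$-dependence of the eigenvalue shift, exactly as $e^{\frac{(2n-1)t}{4}}$ appears in the hyperbolic relation \eqref{eq-h-re-s1}. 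Concretely, I would integrate the identity $\p_\rho\bigl(2\pi e^{-(n-1)t}\sin\rho\,\kappa_{n+1}\bigr)=-\sin\rho\,\Delta$-type expression from $\rho=r$ to $\rho=\pi$ and extract the Abel-transform structure, then undo the half-derivative by an Abel inversion.

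The main obstacle will be the second, correction term in \eqref{eq-s-re-s1}, which has no Euclidean or hyperbolic counterpart. On $\R^n$ and $\mathbb H^n$ the heat kernel decays at spatial infinity, so the integration-by-parts boundary term at the far endpoint vanishes; on $\sph^n$ the ``far endpoint'' is the antipodal point $r=\pi$, where $\kappa_{n+1}$ is finite and nonzero, so the boundary contribution survives. I expect this term to arise precisely as the boundary value $\kappa_{n+1}(s,\pi)$ produced when integrating by parts or when inverting the Abel transform on the compact interval $[0,\pi]$, and the triple integral — with its kernel $\arccos(\cos r\cos\rho+\sin r\sin\rho\cos\theta)$, which is just the spherical law of cosines for the distance between two points at polar angles $r,\rho$ separated by azimuthal angle $\theta$ — should represent the heat evolution of the antipodal boundary data back through the sphere via Duhamel's principle. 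The delicate part is bookkeeping the constant $(n-1)2^{n-1}\omega_{n-2}$, where $\omega_{n-2}$ is the volume of $\sph^{n-2}$; this will come out of writing the convolution on $\sph^n$ in geodesic polar coordinates and integrating over the $(n-2)$-sphere of directions, and I expect tracking this geometric constant through the Abel inversion to be the most error-prone step. I would verify the whole formula by checking the short-distance asymptotics against the known Euclidean limit and, as an independent consistency check, by testing it on the explicitly known kernels of $\sph^1$ and $\sph^3$.
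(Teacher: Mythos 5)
Your plan for part (1) is essentially the paper's proof (the paper performs the commutator computation after substituting $\sigma=\cos r$, which reduces it to differentiating the equation $\p_t\kappa_n=[(1-\sigma^2)\p_\sigma^2-n\sigma\p_\sigma]\kappa_n$ once in $\sigma$, and then checks the initial condition against a test function), so that part is fine. For part (2), however, your derivation strategy has a structural problem: ``inverting the differential recurrence of part (1)'' cannot yield \eqref{eq-s-re-s1}, because part (1) shifts the dimension by \emph{two}, so its inverse is a relation between $\kappa_n$ and $\kappa_{n+2}$ (a double Abel-type integral), not the one-step relation between $\kappa_n$ and $\kappa_{n+1}$ that you must prove. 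The paper does not invert anything; its engine is the direct computation (Lemma \ref{lem-sphere-1}) that the transform $T(t,r)=\int_r^\pi\kappa_{n+1}(t,\rho)\sin\rho\,(\cos r-\cos\rho)^{-1/2}d\rho$ satisfies
\begin{equation*}
(\p_t-\Delta_n)T=\frac{2n-1}{4}\,T-(n-1)(1+\cos r)^{-\frac12}\kappa_{n+1}(t,\pi),
\end{equation*}
the inhomogeneous term being exactly the antipodal boundary contribution you anticipated. One then \emph{adds} the Duhamel solution $v$ of the inhomogeneous heat equation with source $(n-1)e^{-\frac{2n-1}{4}t}\kappa_{n+1}(t,\pi)\sec(r/2)$ to cancel that term; writing $v$ in geodesic polar coordinates via the spherical law of cosines produces the triple integral and the constant $(n-1)2^{n-1}\omega_{n-2}$. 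So your Duhamel intuition about the second term is correct, but the proof is a direct verification that the full right-hand side solves the heat equation with $\delta_o$ initial data, not an Abel inversion.

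The more serious gap is that your proposal never uses, or even identifies, the hypothesis $n\ge 2$ — and any argument that does not cannot be correct, since \eqref{eq-s-re-s1} is \emph{false} for $n=1$. The point is that the source $\sec(r/2)$ blows up at the antipodal point $o'$, so the function $u=\sqrt2\,e^{-\frac{2n-1}{4}t}T+v$ is only shown to solve the heat equation on $\sph^n\setminus\{o'\}$. To conclude $u=\kappa_n$ by uniqueness one must know that the isolated singularity at $o'$ is removable, and removability of isolated singularities for the heat equation holds precisely when $n\ge2$ (the paper quotes \cite{Hs,Hu,NS,Sk} for this). This is not a technicality: it is the only place $n\ge2$ enters, and it also invalidates your proposed consistency check on $\sph^1$, where the one-step integral relation simply does not hold.
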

The reason that  \eqref{eq-s-re-s1} is not true for $n=1$ is mainly  that removability of singularity for  heat equations does not hold for $n=1$ (see \cite{Hs,Hu,NS,Sk}).

From the recurrence relation \eqref{eq-s-re-s2}, one can immediately obtain  explicit expressions of heat kernels for odd dimensional spheres:
\begin{equation}
\kappa_{2m+1}=\frac{e^{m^2 t}}{(2\pi)^m}\left(-\frac{1}{\sin r}\p_r\right)^{m}\kappa_1
\end{equation}
for $m=1,2,\cdots$, where
\begin{equation}
\kappa_1(t,r)=\frac{1}{\sqrt{4\pi t}}\sum_{k=-\infty}^\infty e^{-\frac{(r+2k\pi )^2}{4t}}.
\end{equation}
This expression was already known in literature, see for examples \cite{CT,Ho,Wu}. Moreover, by letting $r=0,\pi$ in \eqref{eq-s-re-s1}, we have the following recurrence relations:
\begin{equation}
\begin{split}
\kappa_n(t,0)=&2e^{-\frac{2n-1}{4}t}\int_{0}^\pi\kappa_{n+1}(t,\rho)\cos\left(\frac{\rho}{2}\right)d\rho+(n-1)2^{n-1}\omega_{n-1}\times\\
&\int_0^te^{-\frac{2n-1}{4}s}\kappa_{n+1}(s,\pi)\int_0^\pi\kappa_n(t-s,\rho)\sin^{n-1}\left(\frac\rho 2\right)\cos^{n-2}\left(\frac\rho 2\right) d\rho ds.
\end{split}
\end{equation}
and
\begin{equation}
\begin{split}
&\kappa_n(t,\pi)\\
=&(n-1)2^{n-1}\omega_{n-1}\int_0^te^{-\frac{2n-1}{4}s}\kappa_{n+1}(s,\pi)\int_0^\pi\kappa_n(t-s,\rho)\sin^{n-2}\left(\frac\rho 2\right)\cos^{n-1}\left(\frac\rho 2\right) d\rho ds.
\end{split}
\end{equation}

As an application of the recurrence relations \eqref{eq-h-re-s2} and \eqref{eq-s-re-s2}, we have the following recurrence relations of the diagonal of heat kernels and $\kappa_n(t,\pi)$.
\begin{cor} \label{cor-main}Let the notations be the same as before. Then, we have the following recurrence relations:

\begin{enumerate}
\item $$K_{n+2}(t,0)=-\frac{e^{-nt}}{2n\pi}\p_t K_n(t,0).$$
\item $$\kappa_{n+2}(t,0)=-\frac{e^{nt}}{2n\pi}\p_t \kappa_n(t,0).$$
\item $$\kappa_{n+2}(t,\pi)=\frac{e^{nt}}{2n\pi}\p_t\kappa_n(t,\pi).$$
\end{enumerate}
\end{cor}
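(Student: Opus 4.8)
The plan is to feed the second-order recurrence relations \eqref{eq-h-re-s2} and \eqref{eq-s-re-s2} into the radial form of the heat equation, evaluated at the poles of geodesic polar coordinates, namely $r=0$ on $\mathbb{H}^n$ and $\sph^n$, and $r=\pi$ on $\sph^n$. The key preliminary fact is that each heat kernel, being a smooth function on the manifold, extends smoothly across each pole as an \emph{even} function of the normal coordinate there; consequently $\p_r K_n(t,0)=0$, $\p_r\kappa_n(t,0)=0$, and $\p_r\kappa_n(t,\pi)=0$. This is precisely what turns the right-hand sides of \eqref{eq-h-re-s2} and \eqref{eq-s-re-s2} into $0/0$ forms at the poles, which I would resolve by L'Hôpital's rule.

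For part (1), I would let $r\to 0$ in \eqref{eq-h-re-s2}. Since $\sinh r\to 0$ and $\p_r K_n\to 0$, L'Hôpital gives $\lim_{r\to 0}\frac{\p_r K_n}{\sinh r}=\frac{\p_r^2 K_n}{\cosh r}\big|_{r=0}=\p_r^2 K_n(t,0)$, so $K_{n+2}(t,0)=-\frac{e^{-nt}}{2\pi}\p_r^2 K_n(t,0)$. The other ingredient is the radial heat equation $\p_t K_n=\p_r^2 K_n+(n-1)\coth r\,\p_r K_n$ on $\mathbb{H}^n$. Using $\coth r\sim 1/r$ together with $\p_r K_n\sim r\,\p_r^2 K_n(t,0)$, the drift term satisfies $(n-1)\coth r\,\p_r K_n\to(n-1)\p_r^2 K_n(t,0)$, whence $\p_t K_n(t,0)=n\,\p_r^2 K_n(t,0)$. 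Substituting $\p_r^2 K_n(t,0)=\frac1n\p_t K_n(t,0)$ yields (1). Part (2) is word-for-word identical after replacing $\sinh,\coth,e^{-nt}$ by $\sin,\cot,e^{nt}$ and using the radial heat equation $\p_t\kappa_n=\p_r^2\kappa_n+(n-1)\cot r\,\p_r\kappa_n$ on $\sph^n$.

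For part (3) I would carry out the same computation at the antipodal pole $r=\pi$. L'Hôpital in \eqref{eq-s-re-s2} now gives $\lim_{r\to\pi}\frac{\p_r\kappa_n}{\sin r}=\frac{\p_r^2\kappa_n}{\cos r}\big|_{r=\pi}=-\p_r^2\kappa_n(t,\pi)$, the sign coming from $\cos\pi=-1$, so that $\kappa_{n+2}(t,\pi)=\frac{e^{nt}}{2\pi}\p_r^2\kappa_n(t,\pi)$. Near $r=\pi$ one has $\cot r\sim 1/(r-\pi)$ and $\p_r\kappa_n\sim(r-\pi)\p_r^2\kappa_n(t,\pi)$, so the drift term again contributes $(n-1)\p_r^2\kappa_n(t,\pi)$ and the heat equation gives $\p_t\kappa_n(t,\pi)=n\,\p_r^2\kappa_n(t,\pi)$. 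Combining the two produces (3); the single sign change from $\cos\pi=-1$ is exactly what converts the minus sign of \eqref{eq-s-re-s2} into the plus sign in the statement.

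The one genuinely delicate point is the justification of these limits: one must know that $K_n$ and $\kappa_n$ extend smoothly across the poles $r=0$ and $r=\pi$ and are even in the local normal coordinate there, so that the first radial derivatives vanish and L'Hôpital legitimately reduces each ratio to a second radial derivative. This regularity is a standard consequence of the smoothness of the heat kernel as a function on the manifold combined with the rotational symmetry of the model spaces; once it is granted, every remaining step is an elementary Taylor expansion at the pole.
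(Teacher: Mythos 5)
Your proposal is correct and follows essentially the same route as the paper: evaluate the radial heat equation at the pole to obtain $\p_t K_n(t,0)=n\,\p_r^2K_n(t,0)$ (and its spherical analogues at $r=0,\pi$), then evaluate the recurrence \eqref{eq-h-re-s2} or \eqref{eq-s-re-s2} at the pole to replace $\p_r^2$ by $\p_t$. The only difference is that you spell out the L'H\^opital/Taylor justification and the vanishing of the first radial derivatives, which the paper treats implicitly (having already noted the needed regularity via \cite[Proposition 2.7]{KW} when proving the recurrences themselves).
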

The recurrence relations are useful in the  computation of the diagonal of heat kernels for hyperbolic spaces and spheres. As an application, we use them to compute the diagonal of heat kernels of odd dimensional hyperbolic spaces and the heat trace asymptotic expansions of odd dimensional spheres. Although heat trace coefficients of spheres have been obtained in \cite{CW,Ca,MS,Po,Wu}, the recurrence relation in (2) of Corollary \ref{cor-main}  provides a different and simpler approach.
\begin{thm}\label{thm-heat-trace}Let the notations be the same as before. Then,
\begin{enumerate}
\item \begin{equation}
K_{2m+1}(t,0)=(4\pi t)^{-\frac{2m+1}{2}}e^{-m^2t}\sum_{k=0}^{m-1}\frac{\Gamma(m-k+\frac12)c_{m,k}}{\Gamma(m+\frac12)}t^k
\end{equation}
for $m=1,2,\cdots$.
\item
\begin{equation}
\begin{split}
\kappa_{2m+1}(t,0)\sim&(4\pi t)^{-\frac{2m+1}{2}}e^{m^2t}\sum_{k=0}^{m-1}(-1)^k\frac{\Gamma(m-k+\frac12)c_{m,k}}{\Gamma(m+\frac12)}t^k\\
=&\frac{1}{{\rm Vol}(\sph^{2m+1})}t^{-\frac{2m+1}{2}}\sum_{k=0}^\infty a_{2m+1,k}t^k
\end{split}
\end{equation}
as $t\to0^+$, where
\begin{equation}\label{eq-heat-coef}
a_{2m+1,k}=\sum_{l=0}^k(-1)^l\frac{m^{2k-2l}\Gamma\left(m-l+\frac12\right)c_{m,l}}{(2m)!(k-l)!},\ \ k=0,1,2,\cdots
\end{equation}
are the heat trace coefficients of $\sph^{2m+1}$.
\end{enumerate}
Here
\begin{equation}
c_{m,k}=\left\{\begin{array}{ll}1&k=0\\
\sum_{1\leq i_1<i_2<\cdots<i_k\leq m-1}i_1^2i_2^2\cdots i_k^2&1\leq k\leq m-1\\
0&k\geq m
\end{array}\right.
\end{equation}
for $m=1,2,\cdots$.
\end{thm}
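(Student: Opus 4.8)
The plan is to prove both statements by induction on $m$, driven entirely by the diagonal recurrences of Corollary~\ref{cor-main}, with the only substantive input being a combinatorial identity for the coefficients $c_{m,k}$. For part (1), I would take as base case $K_1(t,0)=(4\pi t)^{-\frac12}$ (since $\mathbb H^1=\R$) and verify $m=1$ directly. For the inductive step I would substitute the claimed expression for $K_{2m+1}(t,0)$ into
\begin{equation*}
K_{2m+3}(t,0)=-\frac{e^{-(2m+1)t}}{2(2m+1)\pi}\p_t K_{2m+1}(t,0)
\end{equation*}
and differentiate. Writing $K_{2m+1}(t,0)=(4\pi)^{-\frac{2m+1}2}e^{-m^2t}\sum_k \frac{\Gamma(m-k+\frac12)c_{m,k}}{\Gamma(m+\frac12)}t^{k-\frac{2m+1}2}$, the factor $e^{-(2m+1)t}$ combines with $e^{-m^2t}$ to give $e^{-(m+1)^2t}$, while $\p_t$ applied to $e^{-m^2t}t^{k-\frac{2m+1}2}$ produces one term proportional to $t^{k-\frac{2m+1}2}$ and one to $t^{k-\frac{2m+3}2}$; these are exactly the two powers contributing to the coefficients of $t^{(k+1)-\frac{2m+3}2}$ and $t^{k-\frac{2m+3}2}$ at level $m+1$.

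The bookkeeping is controlled by the Gamma identities $\Gamma(m+\frac32)=(m+\frac12)\Gamma(m+\frac12)$ and $\Gamma(m-j+\frac32)=(m-j+\frac12)\Gamma(m-j+\frac12)$. The first collapses the scalar prefactor via $-\frac{2}{2m+1}\frac1{\Gamma(m+\frac12)}=-\frac1{\Gamma(m+\frac32)}$, and the second absorbs the weight $(m-j+\frac12)$ coming out of the $t$-derivative into the shift from $\Gamma(m-j+\frac12)$ to $\Gamma(m-j+\frac32)$. After collecting the coefficient of $t^{j-\frac{2m+3}2}$, matching against the level-$(m+1)$ ansatz reduces precisely to
\begin{equation*}
c_{m+1,j}=c_{m,j}+m^2c_{m,j-1},
\end{equation*}
which is the standard recurrence $e_k(x_1,\dots,x_m)=e_k(x_1,\dots,x_{m-1})+x_m\,e_{k-1}(x_1,\dots,x_{m-1})$ for elementary symmetric polynomials applied with $x_i=i^2$ (here $c_{m,k}=e_k(1^2,\dots,(m-1)^2)$, with conventions $c_{m,-1}=0$, $c_{m,m}=0$). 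The endpoints $j=0$ and $j=m$ reproduce $c_{m+1,0}=1$ and $c_{m+1,m}=(m!)^2$, closing the induction for part (1).

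For part (2) I would run the identical induction with the sphere recurrence $\kappa_{n+2}(t,0)=-\frac{e^{nt}}{2n\pi}\p_t\kappa_n(t,0)$; the only changes are that $e^{nt}$ replaces $e^{-nt}$, producing the growing exponential $e^{m^2t}$ and the alternating signs $(-1)^k$, while the same identity $c_{m+1,j}=c_{m,j}+m^2c_{m,j-1}$ governs each step. The base case is $\kappa_1(t,0)\sim(4\pi t)^{-\frac12}$ as $t\to0^+$, the theta-type tail being exponentially small; since $\p_t$ preserves exponentially small terms, they never enter the asymptotic expansion, which is why the statement uses $\sim$. To read off the trace coefficients I would expand $e^{m^2t}=\sum_{p\ge0}m^{2p}t^p/p!$, collect the coefficient of $t^{k-\frac{2m+1}2}$ (extending the inner sum to $0\le l\le k$ since $c_{m,l}=0$ for $l\ge m$), and simplify the overall constant; formula \eqref{eq-heat-coef} then follows from
\begin{equation*}
{\rm Vol}(\sph^{2m+1})\,(4\pi)^{-\frac{2m+1}2}\frac1{\Gamma(m+\frac12)}=\frac1{(2m)!},
\end{equation*}
which comes from ${\rm Vol}(\sph^{2m+1})=2\pi^{m+1}/m!$ and $\Gamma(m+\frac12)=(2m)!\sqrt\pi/(4^mm!)$.

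The main obstacle I anticipate is the inductive bookkeeping: keeping the two powers of $t$, the shifted Gamma factors, and the signs aligned so that the coefficient comparison collapses exactly to the elementary-symmetric recurrence. Verifying that recurrence and pinning down the final normalization constant are the two places where a slip would be easiest, but neither step is conceptually deep once the algebra is organized.
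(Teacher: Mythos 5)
Your proposal is correct and follows essentially the same route as the paper: both arguments induct on $m$ through the diagonal recurrences of Corollary \ref{cor-main}, starting from $K_1(t,0)=(4\pi t)^{-1/2}$ and $\kappa_1(t,0)\sim(4\pi t)^{-1/2}$ (the theta tail being discarded as exponentially small, exactly as you argue), and both finish with the same normalization identity $\mathrm{Vol}(\sph^{2m+1})(4\pi)^{-\frac{2m+1}{2}}/\Gamma(m+\tfrac12)=1/(2m)!$. The only difference is presentational: where you invoke the elementary symmetric polynomial recurrence $c_{m+1,j}=c_{m,j}+m^2c_{m,j-1}$ directly when matching coefficients, the paper encodes the identical identity via the generating function $F_m(z)=\prod_{k=0}^{m-1}(1+k^2z)$ and contour-integral coefficient extraction, after first normalizing to $Q_m=\Gamma(m+\tfrac12)P_m$.
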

One should note that \eqref{eq-heat-coef} has been obtained in \cite[Theorem 1.3.1]{Po}. We would also like to mention that explicit expressions of heat kernels of some symmetric spaces were presented in \cite{AO,Ca}, and in \cite{FJ}, an expression of the heat kernel of $\sph^2$ in series was presented.

The organization of the rest of this paper is as follows: In Section 2, we give a direct proof to \eqref{eq-h-re-s2} and \eqref{eq-h-re-s1} and prove (1) of Corollary \ref{cor-main}. In Section 3, we prove Theorem \ref{thm-main} and (2) and (3) of Corollary \ref{cor-main}. Finally, in Section 4, we prove Theorem \ref{thm-heat-trace}.

\section{A direct proof of heat kernel recurrence on hyperbolic spaces}
In this section, we give proofs of the recurrence relations \eqref{eq-h-re-s2} and \eqref{eq-h-re-s1} for hyperbolic spaces by direct computation .

Let $h(t,r)$ be a smooth function. To check that $h(t,r(x,y))$ is the heat kernel of the $n$-dimensional hyperbolic space $\mathbb H^n$, we only need to  check that
\begin{equation}\label{eq-h-h-1}
h_t-\Delta_n h=h_t-h_{rr}-(n-1)\coth(r)h_r=0
\end{equation}
and
\begin{equation}\label{eq-h-h-2}
\int_{0}^\infty h(t,r)f(r)\omega_{n-1}\sinh^{n-1}rdr\to f(0)
\end{equation}
as $t\to0^+$ for any smooth function $f$ with compact support. Here and throughout this section, $\Delta_n$ is the Laplacian operator on $\mathbb H^n$ and  $$\omega_{n-1}=\frac{2\pi^\frac n2}{\Gamma(\frac n2)}$$
is the volume  of the $n-1$ dimensional sphere.
 Moreover, setting $\sigma=\cosh r$, then \eqref{eq-h-h-1}  is equivalent to
\begin{equation}\label{eq-h-h-1'}
\p_th-[(\sigma^2-1)\p_\sigma^2h+n\sigma\p_\sigma h]=0.
\end{equation}

We now give a direct proof to \eqref{eq-h-re-s2}.
\begin{thm}
Let $K_n(t,r(x,y))$ be the heat kernel of $\mathbb H^n$. Then,
\begin{equation}
K_{n+2}=-\frac{e^{-nt}}{2\pi\sinh r}\p_r K_n.
\end{equation}
\end{thm}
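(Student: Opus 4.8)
The plan is to verify directly that the right-hand side, which I will call
$$\widetilde K_{n+2}(t,r):=-\frac{e^{-nt}}{2\pi\sinh r}\p_r K_n(t,r),$$
is the heat kernel of $\mathbb H^{n+2}$, by checking the two characterizing conditions \eqref{eq-h-h-1} and \eqref{eq-h-h-2} in dimension $n+2$. The computation is cleanest in the variable $\sigma=\cosh r$. Since $\p_r=\sinh r\,\p_\sigma$, the factor $\sinh r$ in the denominator cancels and the candidate becomes simply
$$\widetilde K_{n+2}=-\frac{e^{-nt}}{2\pi}\p_\sigma K_n,$$
so it suffices to understand how $\p_\sigma$ acts on solutions of \eqref{eq-h-h-1'}.

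For the equation, I would start from the fact that $K_n$ satisfies \eqref{eq-h-h-1'}, namely $\p_t K_n=(\sigma^2-1)\p_\sigma^2 K_n+n\sigma\p_\sigma K_n$, and differentiate this identity once in $\sigma$. Writing $u=\p_\sigma K_n$, the product rule produces
$$\p_t u=(\sigma^2-1)\p_\sigma^2 u+(n+2)\sigma\p_\sigma u+nu;$$
the two extra first-order terms combine to upgrade the coefficient $n\sigma$ to $(n+2)\sigma$, at the cost of a zeroth-order term $nu$. Setting $w=e^{-nt}u$ removes exactly this zeroth-order term: a one-line check shows $\p_t w=(\sigma^2-1)\p_\sigma^2 w+(n+2)\sigma\p_\sigma w$, which is \eqref{eq-h-h-1'} in dimension $n+2$. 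Hence $\widetilde K_{n+2}=-\tfrac{1}{2\pi}w$ solves the $(n+2)$-dimensional heat equation.

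For the initial condition \eqref{eq-h-h-2} in dimension $n+2$, I would substitute the definition of $\widetilde K_{n+2}$ and integrate by parts to move $\p_r$ off $K_n$:
$$\int_0^\infty\widetilde K_{n+2}\,f(r)\,\omega_{n+1}\sinh^{n+1}r\,dr=\frac{e^{-nt}\omega_{n+1}}{2\pi}\int_0^\infty K_n\big[f'\sinh^n r+nf\cosh r\,\sinh^{n-1}r\big]\,dr,$$
the boundary terms vanishing because $\sinh^n r\to0$ at $r=0$ (where $K_n$ is finite) and $f$ has compact support. Using the elementary identity $\omega_{n+1}=\tfrac{2\pi}{n}\,\omega_{n-1}$, the right-hand side becomes a combination of two integrals of the form $\int_0^\infty K_n\,g\,\omega_{n-1}\sinh^{n-1}r\,dr$ against the test functions $g=f\cosh r$ and $g=\tfrac1n f'\sinh r$. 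By \eqref{eq-h-h-2} for $K_n$ these tend to $g(0)$ as $t\to0^+$; the first contributes $f(0)$ while the second vanishes because of the $\sinh r$ factor at the origin, giving the required limit $f(0)$.

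The routine part is the differentiation establishing the equation; the step needing the most care is the initial-condition verification, specifically the bookkeeping in the integration by parts (vanishing of the boundary term at $r=0$) and the correct normalization constant $\omega_{n+1}/\omega_{n-1}=2\pi/n$, which is precisely what makes the spurious first-order term drop out in the limit.
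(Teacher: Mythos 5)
Your proof is correct and follows essentially the same route as the paper's: both pass to $\sigma=\cosh r$, differentiate the radial equation \eqref{eq-h-h-1'} in $\sigma$ and absorb the resulting zeroth-order term $n\,\p_\sigma K_n$ via the factor $e^{-nt}$, then verify the initial condition by the identical integration by parts using $\omega_{n+1}=\tfrac{2\pi}{n}\omega_{n-1}$. The only detail the paper adds is an explicit justification (citing Kazdan--Warner) that $\tfrac{1}{\sinh r}\p_r K_n$ extends to a smooth function across $r=0$, which you implicitly assume.
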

\begin{proof}Since $K_n(t,r(x,y))$ is a smooth function, it is not hard to see that $\p_rK_n(t,0)=0$ and $\frac{1}{\sinh r}\p_r K_n(t,r(x,y))$ is a smooth function (see \cite[Proposition 2.7]{KW}).

Moreover, note that
\begin{equation}
\p_tK_n-[(\sigma^2-1)\p_\sigma^2K_n+n\sigma\p_\sigma K_n]=0.
\end{equation}
Taking derivative on the last equality with respect to $\sigma$ gives us
\begin{equation}
  \p_t\p_\sigma K_n-[(\sigma^2-1)\p_\sigma^2\p_\sigma K_n+(n+2)\sigma\p_\sigma\p_\sigma K_n]-n\p_\sigma K_n=0
\end{equation}
So
\begin{equation}
(\p_t-\Delta_{n+2})[e^{-nt}\p_\sigma K_n]=0.
\end{equation}
Furthermore, for any smooth function $f(r)$ with compact support,
\begin{equation}
\begin{split}
&-\int_{\mathbb H^{n+2}}\frac{e^{-nt}}{2\pi\sinh r}\p_rK_n(t,r)f(r)dV\\
=&-\omega_{n+1}\int_0^\infty\frac{e^{-nt}}{2\pi}\p_rK_n(t,r)f(r)\sinh^{n}rdr\\
=&\frac{e^{-nt}\omega_{n+1}}{2\pi}\int_0^\infty K_n(t,r)(f(r)\sinh^nr)_rdr\\
=&\frac{e^{-nt}\omega_{n+1}}{2\pi\omega_{n-1}}\int_0^\infty K_n(f_r\sinh r+nf(r)\cosh r)\omega_{n-1}\sinh^{n-1}rdr\\
\to& f(0)
\end{split}
\end{equation}
as $t\to 0^+$, by noting that $K_n$ is the heat kernel of $\mathbb H^n$.  Then, by noting that $\p_\sigma K_n=\frac{1}{\sinh r}\p_r K_n$, we get the conclusion.
\end{proof}
Next, we come to prove (1) of Corollary \ref{cor-main}.
\begin{cor}\label{cor-re-h-trace}Let $K_n(t,r(x,y))$ be the heat kernel of $\mathbb H^n$. Then,
\begin{equation}
K_{n+2}(t,0)=-\frac{e^{-nt}}{2n\pi}\p_t K_n(t,0).
\end{equation}
\end{cor}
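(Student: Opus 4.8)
The plan is to evaluate the recurrence relation \eqref{eq-h-re-s2}, already established above, at the origin $r=0$ and convert the resulting radial second derivative into a time derivative using the heat equation \eqref{eq-h-h-1}. Starting from
$$
K_{n+2}(t,r)=-\frac{e^{-nt}}{2\pi\sinh r}\p_r K_n(t,r),
$$
I would let $r\to0^+$. Since $K_n(t,r(x,y))$ is smooth on $\mathbb H^{n}$, we have $\p_r K_n(t,0)=0$, so the right-hand side is a genuine $0/0$ limit; smoothness of $\frac{1}{\sinh r}\p_r K_n$ (again by \cite[Proposition 2.7]{KW}) guarantees the limit exists, and L'Hôpital's rule in $r$ gives
$$
K_{n+2}(t,0)=-\frac{e^{-nt}}{2\pi}\lim_{r\to0^+}\frac{\p_r K_n}{\sinh r}
=-\frac{e^{-nt}}{2\pi}\lim_{r\to0^+}\frac{\p_r^2 K_n}{\cosh r}
=-\frac{e^{-nt}}{2\pi}\,\p_r^2 K_n(t,0).
$$

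The second ingredient is to rewrite $\p_r^2 K_n(t,0)$ in terms of $\p_t K_n(t,0)$. For this I would take the limit $r\to0^+$ in the heat equation $\p_t K_n=\p_r^2 K_n+(n-1)\coth(r)\,\p_r K_n$. The only delicate term is $(n-1)\coth(r)\,\p_r K_n$, which is an indeterminate product because $\coth r\sim 1/r$ while $\p_r K_n\to0$. Using $\p_r K_n(t,0)=0$ together with the Taylor expansion $\p_r K_n(t,r)=\p_r^2 K_n(t,0)\,r+O(r^2)$, one sees that $\coth(r)\,\p_r K_n\to\p_r^2 K_n(t,0)$ as $r\to0^+$. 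Hence, in the limit,
$$
\p_t K_n(t,0)=\p_r^2 K_n(t,0)+(n-1)\,\p_r^2 K_n(t,0)=n\,\p_r^2 K_n(t,0),
$$
so that $\p_r^2 K_n(t,0)=\tfrac1n\,\p_t K_n(t,0)$.

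Combining the two displays yields
$$
K_{n+2}(t,0)=-\frac{e^{-nt}}{2\pi}\cdot\frac1n\,\p_t K_n(t,0)
=-\frac{e^{-nt}}{2n\pi}\,\p_t K_n(t,0),
$$
which is the claimed identity. The main technical point — and the only place requiring care — is the justification of the limit of the singular term $\coth(r)\,\p_r K_n$ at $r=0$; this rests on the smoothness of the heat kernel as a function on $\mathbb H^n$, which forces $\p_r K_n(t,0)=0$ and makes $K_n$ even in $r$ near the origin, so that the apparent singularity of $\coth r$ is exactly cancelled. Everything else is a direct application of L'Hôpital's rule and the radial form of the heat equation.
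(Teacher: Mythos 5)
Your proof is correct and takes essentially the same route as the paper: evaluate the recurrence \eqref{eq-h-re-s2} at $r=0$ and use the radial heat equation at the origin to replace $\p_r^2 K_n(t,0)$ by $\frac1n\p_t K_n(t,0)$. The only difference is that you spell out the limiting arguments (L'H\^opital and the Taylor expansion handling $\coth(r)\p_r K_n$) that the paper leaves implicit when it "sets $r=0$".
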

\begin{proof}
Setting $r=0$ in
\begin{equation}
\p_tK_n-[\p_r^2K_n+(n-1)\coth r\p_rK_n]=0,
\end{equation}
we have
\begin{equation}
\p_t K_n(t,0)=n\p_r^2K_n(t,0).
\end{equation}
Then, by \eqref{eq-h-re-s2},
\begin{equation}
  K_{n+2}(t,0)=-\frac{e^{-nt}}{2\pi}\p_r^2K_{n}(t,0)=-\frac{e^{-nt}}{2n\pi}\p_tK_{n}(t,0).
\end{equation}
\end{proof}
We next come to give a direct proof to  \eqref{eq-h-re-s1}.
\begin{thm}Let $K_n(t,r(x,y))$ be the heat kernel of $\mathbb H^n$. Then,
\begin{equation}
K_n(t,r)=\sqrt 2e^{\frac{(2n-1)t}{4}}\int_r^\infty \frac{K_{n+1}(t,\rho)\sinh\rho}{(\cosh\rho-\cosh r)^\frac12}d\rho.
\end{equation}
\end{thm}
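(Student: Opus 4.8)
The plan is to show that the right-hand side, which I denote $\Phi_n(t,r)$, is itself the heat kernel of $\mathbb H^n$; by the uniqueness built into the characterization \eqref{eq-h-h-1}--\eqref{eq-h-h-2} it must then coincide with $K_n$. So I would verify the two defining properties for $\Phi_n$: that it solves the radial heat equation in dimension $n$, and that it carries the correct initial delta behavior. The natural coordinate is $\sigma=\cosh r$, $u=\cosh\rho$, under which $(\cosh\rho-\cosh r)^{-1/2}\sinh\rho\,d\rho$ becomes $(u-\sigma)^{-1/2}du$, so that $\Phi_n=\sqrt2\,e^{(2n-1)t/4}\,I(\sigma)$ with
\begin{equation*}
I(\sigma)=\int_\sigma^\infty\frac{\hat K_{n+1}(t,u)}{(u-\sigma)^{1/2}}\,du,
\end{equation*}
where $\hat K_{n+1}(t,u)$ is $K_{n+1}$ written as a function of $u=\cosh\rho$. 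This $I$ is a Weyl/Abel fractional integral of order $\tfrac12$, and the content of the theorem is that it intertwines the radial operators $L_m:=(\sigma^2-1)\p_\sigma^2+m\sigma\p_\sigma$ of \eqref{eq-h-h-1'} across consecutive dimensions.

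The key step is a differentiation identity for this singular integral. Integrating by parts in $u$ once to regularize, and using that $K_{n+1}$ and its derivatives decay at infinity (so the boundary terms at $u=\infty$ vanish) while the factor $(u-\sigma)^{1/2}$ kills the boundary term at $u=\sigma$, I would establish
\begin{equation*}
\p_\sigma^k I(\sigma)=\int_\sigma^\infty\frac{\p_u^k\hat K_{n+1}(t,u)}{(u-\sigma)^{1/2}}\,du,\qquad k=1,2.
\end{equation*}
Substituting the $(n+1)$-dimensional equation $\p_t\hat K_{n+1}=(u^2-1)\p_u^2\hat K_{n+1}+(n+1)u\,\p_u\hat K_{n+1}$ into $\p_tI$ and subtracting $L_nI$ gives a single integral; writing $u^2-\sigma^2=(u-\sigma)(u+\sigma)$ and $(n+1)u-n\sigma=n(u-\sigma)+u$ isolates the weakly singular part, and after integrating by parts twice the algebra collapses: the coefficient of $\p_u\hat K_{n+1}$ reduces to $\tfrac{2n-1}{2}(u-\sigma)^{1/2}$, and one more integration by parts turns $\int_\sigma^\infty(u-\sigma)^{1/2}\p_u\hat K_{n+1}\,du$ into $-\tfrac12 I$. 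Thus $\p_tI-L_nI=-\tfrac{2n-1}{4}I$, equivalently $\p_t\big(e^{(2n-1)t/4}I\big)=L_n\big(e^{(2n-1)t/4}I\big)$, which is exactly \eqref{eq-h-h-1'} in dimension $n$. The exponential prefactor is precisely the correction manufactured by the transform.

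For the initial condition I would pair $\Phi_n$ against a compactly supported test function $f$ with the volume density $\omega_{n-1}\sinh^{n-1}r\,dr$, apply Tonelli to swap the $r$- and $\rho$-integrations over $0\le r\le\rho$, and collect the inner integral into
\begin{equation*}
G(\rho)=\int_0^\rho\frac{f(r)\sinh^{n-1}r}{(\cosh\rho-\cosh r)^{1/2}}\,dr,
\end{equation*}
so that the pairing equals $\sqrt2\,e^{(2n-1)t/4}\,\omega_{n-1}\int_0^\infty K_{n+1}(t,\rho)\sinh\rho\,G(\rho)\,d\rho$. Since $K_{n+1}$ is the heat kernel of $\mathbb H^{n+1}$, this tends to the value at $\rho=0$ of the profile $g(\rho)=\sqrt2\,\omega_{n-1}G(\rho)\big/\big(\omega_n\sinh^{n-1}\rho\big)$, so it remains to evaluate $\lim_{\rho\to0^+}g(\rho)$. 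Using $\cosh\rho-\cosh r\sim(\rho^2-r^2)/2$, $\sinh r\sim r$, the Beta integral $\int_0^1 s^{n-1}(1-s^2)^{-1/2}\,ds=\tfrac12 B(\tfrac n2,\tfrac12)$, and the identity $\omega_{n-1}/\omega_n=\Gamma(\tfrac{n+1}2)\big/\big(\sqrt\pi\,\Gamma(\tfrac n2)\big)$, the constants cancel to give $g(0)=f(0)$, which completes the verification.

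The main obstacle is the singular-integral bookkeeping in the second paragraph: differentiating $I$ under the integral sign and justifying the repeated integrations by parts requires controlling the $(u-\sigma)^{-1/2}$ singularity together with the decay of $K_{n+1}$ and its $\sigma$-derivatives, and one must confirm that every boundary term genuinely vanishes. The initial-condition step relies on the same weakly-singular asymptotics and a uniform-convergence argument to pass the limit inside the pairing. Once the intertwining relation $\p_tI-L_nI=-\tfrac{2n-1}{4}I$ is established, the precise matching of the constants $\sqrt2$ and $e^{(2n-1)t/4}$ is forced rather than guessed.
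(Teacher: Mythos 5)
Your proposal is correct and follows essentially the same route as the paper: verify that the right-hand side satisfies the radial heat equation in dimension $n$ (via the Abel-type integral in the $\cosh$ variable, the substitution of the $(n+1)$-dimensional equation, and integrations by parts yielding the intertwining constant $-\tfrac{2n-1}{4}$, hence the factor $e^{(2n-1)t/4}$), and verify the initial delta condition by swapping the order of integration and evaluating the same Beta-function limit $\omega_{n-1}B(\tfrac n2,\tfrac12)/\omega_n=1$. The only cosmetic differences are that the paper translates to $\xi=\cosh\rho-\cosh r$ before differentiating under the integral sign (which makes your commutation identity automatic) and computes the boundary limit via the substitution $x=\cosh r-1$ rather than the small-$\rho$ asymptotics, but these are equivalent.
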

\begin{proof}
Let $\sigma=\cosh r$, $s=\cosh\rho$ and $\xi=s-\sigma$. Then
\begin{equation*}
\begin{split}
\int_{r}^\infty\frac{K_{n+1}(t,\rho)\sinh \rho}{(\cosh \rho-\cosh r)^\frac12}d\rho=\int_\sigma^\infty\frac{K_{n+1}}{\sqrt{s-\sigma}}ds=\int_0^\infty\frac{K_{n+1}(t,\xi+\sigma)}{\sqrt \xi}d\xi.\\
\end{split}
\end{equation*}
Hence,
\begin{equation*}
\begin{split}
&(\p_t-\Delta_n)\int_{r}^\infty\frac{K_{n+1}(t,\rho)\sinh \rho}{(\cosh \rho-\cosh r)^\frac12}d\rho\\
=&\int_0^\infty\frac{(\p_t-((\sigma^2-1)\p_\sigma^2+n\sigma\p_\sigma))K_{n+1}(t,\xi+\sigma)}{\sqrt \xi}d\xi\\
=&\int_0^\infty\frac{[(((\sigma+\xi)^2-1)\p_\sigma^2+(n+1)(\sigma+\xi)\p_\sigma)-((\sigma^2-1)\p_\sigma^2+n\sigma\p_\sigma)]K_{n+1}(t,\xi+\sigma)}{\sqrt \xi}d\xi\\
=&\int_0^\infty\xi^{-\frac12}[(2\sigma\xi+\xi^2)\p_\xi^2+((n+1)\xi+\sigma)\p_\xi]K_{n+1}(t,\sigma+\xi)d\xi\\
=&-\int_0^\infty\left(\sigma\xi^{-\frac12}+\frac32\xi^\frac12\right)\p_\xi K_{n+1}(t,\sigma+\xi)d\xi+\int_0^\infty\xi^{-\frac12}[((n+1)\xi+\sigma)\p_\xi]K_{n+1}(t,\sigma+\xi)d\xi\\
=&\left(n-\frac12\right)\int_{0}^\infty\xi^\frac12 \p_\xi K_{n+1}(t,\sigma+\xi)d\xi\\
=&-\left(\frac{n}{2}-\frac{1}4\right)\int_0^\infty\xi^{-\frac12}K_{n+1}(t,\sigma+\xi)d\xi\\
=&-\left(\frac{n}{2}-\frac{1}4\right)\int_{r}^\infty\frac{K_{n+1}(t,\rho)\sinh \rho}{(\cosh \rho-\cosh r)^\frac12}d\rho.
\end{split}
\end{equation*}
So,
\begin{equation}
(\p_t-\Delta_n)\left[e^{\frac{2n-1}{4}t}\int_{r}^\infty\frac{K_{n+1}(t,\rho)\sinh \rho}{(\cosh \rho-\cosh r)^\frac12}d\rho\right]=0.
\end{equation}
Moreover,
\begin{equation}
\begin{split}
&\sqrt 2\omega_{n-1}e^{\frac{(2n-1)t}{4}}\int_0^\infty\int_r^\infty\frac{K_{n+1}(t,\rho)\sinh\rho}{(\cosh\rho-\cosh r)^\frac12}d\rho f(r)\sinh^{n-1}rdr\\
=&\sqrt 2\omega_{n-1}e^{\frac{(2n-1)t}{4}}\int_0^\infty K_{n+1}(t,\rho)\sinh\rho\int_0^\rho\frac{f(r)\sinh^{n-1}r}{(\cosh\rho-\cosh r)^\frac12}drd\rho\\
\to&\frac{\sqrt 2\omega_{n-1}}{\omega_n}\lim_{\rho\to 0}\sinh^{1-n}\rho\int_0^\rho\frac{f(r)\sinh^{n-1}r}{(\cosh\rho-\cosh r)^\frac12}dr\\
=&f(0)
\end{split}
\end{equation}
as $t\to 0^+$. The last equality can be computed as follows.

Let $x=\cosh r-1$, $y=\cosh \rho-1$ and $z=\frac xy$. Then,
\begin{equation}
\begin{split}
&\sinh^{1-n}\rho\int_0^\rho\frac{\sinh^{n-1}r}{(\cosh \rho-\cosh r)^\frac12}dr\\
=&[(1+y)^2-1]^{\frac{1-n}{2}}\int_0^{y}\frac{[(1+x)^2-1]^\frac{n-2}{2}}{(y-x)^\frac12}dx\\
=&(2+y)^\frac{1-n}2\int_0^1z^\frac{n-2}{2}(1-z)^{-\frac12}(2+yz)^\frac{n-2}2dz\\
\to&\frac{1}{\sqrt 2}B\left(\frac12,\frac n2\right)\\
=&\frac{\omega_{n}}{\sqrt 2 \omega_{n-1}}
\end{split}
\end{equation}
as $\rho\to 0^+.$ This completes the proof of theorem.
\end{proof}

\section{Heat kernel recurrence on spheres}
In this section, we come prove to the recurrence relations \eqref{eq-s-re-s2} and \eqref{eq-s-re-s1} for heat kernels on spheres. Similarly as before, to check that $h(t,r(x,y))$ is the heat kernel of $\sph^n$, we only need to  check that
\begin{equation}\label{eq-h-s-1}
h_t-\Delta_n h=h_t-h_{rr}-(n-1)\cot(r)h_r=0
\end{equation}
and
\begin{equation}\label{eq-h-s-2}
\int_{0}^\pi h(t,r)f(r)\omega_{n-1}\sin^{n-1}rdr\to f(0)
\end{equation}
as $t\to0^+$ for any smooth function $f(r)$. Here and throughout this section, $\Delta_n$ is the Laplacian operator on $\sph^n$. Moreover, by setting $\sigma=\cos r$, the equation \eqref{eq-h-s-1}  is equivalent to
\begin{equation}\label{eq-h-s-1'}
\p_th-[(1-\sigma^2)\p_\sigma^2h-n\sigma\p_\sigma h]=0.
\end{equation}
We first come to proof \eqref{eq-s-re-s2}.
\begin{thm}
Let $\kappa_n(t,r(x,y))$ be the heat kernel of $\sph^n$. Then
\begin{equation}
\kappa_{n+2}=-\frac{e^{nt}}{2\pi\sin r}\p_r\kappa_n.
\end{equation}
\end{thm}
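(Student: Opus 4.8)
The plan is to verify the candidate $-\frac{e^{nt}}{2\pi\sin r}\p_r\kappa_n$ directly against the two defining conditions \eqref{eq-h-s-1}--\eqref{eq-h-s-2} for the heat kernel of $\sph^{n+2}$, in exact parallel with the hyperbolic argument given above. Writing $\sigma=\cos r$, so that $\p_\sigma=-\frac{1}{\sin r}\p_r$, the candidate is $\frac{1}{2\pi}e^{nt}\p_\sigma\kappa_n$, and the verification splits into a smoothness check, a check of the heat equation, and a check of the delta-function initial condition.

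Before anything else I would record that $\frac{1}{\sin r}\p_r\kappa_n$ is a smooth function of $(t,r(x,y))$ on $\sph^{n+2}$. Since $\kappa_n(t,r(x,y))$ is smooth on $\sph^n$, we have $\p_r\kappa_n(t,0)=0$ and, by the argument in \cite[Proposition 2.7]{KW}, $\frac{1}{\sin r}\p_r\kappa_n$ extends smoothly across $r=0$. The point requiring extra care on the sphere, with no counterpart in the hyperbolic case, is the second endpoint $r=\pi$: because the antipodal point is a genuine smooth point of $\sph^n$ whose cut locus is a single point, $\kappa_n(t,r)$ extends smoothly and evenly across $r=\pi$, so $\p_r\kappa_n(t,\pi)=0$ and the factor $\frac{1}{\sin r}$ is again harmless there. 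This endpoint analysis is where I expect the only real subtlety to lie.

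For the heat equation I would use the equivalent form \eqref{eq-h-s-1'}, namely $\p_t\kappa_n=L_n\kappa_n$ with $L_n=(1-\sigma^2)\p_\sigma^2-n\sigma\p_\sigma$, and differentiate in $\sigma$. A short computation gives $\p_\sigma(L_n\kappa_n)=L_{n+2}(\p_\sigma\kappa_n)-n\,\p_\sigma\kappa_n$, the index shift $n\mapsto n+2$ arising precisely from the $-2\sigma\p_\sigma^2$ term produced when $\p_\sigma$ hits $(1-\sigma^2)\p_\sigma^2$. Hence $(\p_t-\Delta_{n+2})(\p_\sigma\kappa_n)=-n\,\p_\sigma\kappa_n$, and multiplying by $e^{nt}$ cancels the zeroth-order term to give $(\p_t-\Delta_{n+2})[e^{nt}\p_\sigma\kappa_n]=0$; the constant $\frac{1}{2\pi}$ is irrelevant to the PDE.

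Finally, for the initial condition I would pair the candidate against a smooth test function $f$ and integrate by parts. Using $\omega_{n+1}=\frac{2\pi}{n}\omega_{n-1}$ together with the vanishing of $f(r)\sin^n r$ at both $r=0$ and $r=\pi$ (which kills the boundary terms), one reduces the pairing to $\frac{e^{nt}}{n}\int_0^\pi\kappa_n(t,r)\,g(r)\,\omega_{n-1}\sin^{n-1}r\,dr$ where $g(r)=f'(r)\sin r+n f(r)\cos r$ is smooth on $\sph^n$. Applying the heat-kernel property \eqref{eq-h-s-2} for $\sph^n$ and letting $t\to0^+$ then gives $\frac{1}{n}g(0)=\frac{1}{n}\cdot n f(0)=f(0)$, as required. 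Throughout, the only bookkeeping to keep straight is the sign coming from $\p_\sigma=-\frac{1}{\sin r}\p_r$ and the vanishing of the boundary contributions at the new endpoint $r=\pi$.
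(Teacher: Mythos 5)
Your proposal is correct and follows essentially the same route as the paper's proof: smoothness of $\frac{1}{\sin r}\p_r\kappa_n$ via \cite[Proposition 2.7]{KW}, differentiation of the $\sigma=\cos r$ form \eqref{eq-h-s-1'} of the heat equation to produce the dimension shift $n\mapsto n+2$ and the compensating factor $e^{nt}$, and integration by parts against a test function (with the boundary terms killed by $\sin^n r$ at $r=0,\pi$) to verify the delta initial condition via the heat-kernel property of $\kappa_n$ on $\sph^n$. The only differences are cosmetic: you spell out the ratio $\omega_{n+1}=\frac{2\pi}{n}\omega_{n-1}$ and the evenness of $\kappa_n$ across the antipodal point $r=\pi$, both of which the paper uses implicitly.
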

\begin{proof} Since $\kappa_n(t,r(x,y))$ is smooth on $\sph^n$, one can see that $\p_r\kappa(t,0)=\p_r\kappa(t,\pi)=0$ and $\frac{1}{\sin r}\p_r\kappa_n(t,r(x,y))$ is a smooth function (see \cite[Proposition 2.7]{KW}).

Note that
\begin{equation}
\p_t\kappa_n-[(1-\sigma^2)\p_\sigma^2\kappa_n-n\sigma\p_\sigma\kappa_n]=0.
\end{equation}
Taking derivative on the last equality with respect to $\sigma$, we have
\begin{equation}
\p_t\p_\sigma\kappa_n-[(1-\sigma^2)\p_\sigma^2\p_\sigma \kappa_n-(n+2)\sigma\p_\sigma\p_\sigma\kappa_n]+n\p_\sigma\kappa_n=0.
\end{equation}
Thus,
\begin{equation}
(\p_t-\Delta_{n+2})[e^{nt}\p_\sigma \kappa_n]=0.
\end{equation}
Moreover,
\begin{equation}
\begin{split}
&-\int_{\mathbb S^{n+2}}\frac{e^{nt}}{2\pi\sin r}\p_r\kappa_n(t,r)f(r)dV\\
=&-\omega_{n+1}\int_0^\pi\frac{e^{nt}}{2\pi}\p_r\kappa_n(t,r)f(r)\sin^{n}rdr\\
=&\frac{e^{nt}\omega_{n+1}}{2\pi}\int_0^\pi \kappa_n(t,r)(f(r)\sin^nr)_rdr\\
=&\frac{e^{nt}\omega_{n+1}}{2\pi\omega_{n-1}}\int_0^\infty \kappa_n(f_r\sin r+nf(r)\cos r)\omega_{n-1}\sin^{n-1}rdr\\
\to& f(0)
\end{split}
\end{equation}
as $t\to 0^+$, where we have used that $\kappa_n$ is the heat kernel on $\sph^n$. Noting that $\p_\sigma \kappa_n=-\frac{1}{\sin r}\p_r \kappa_n$, we get the conclusion.
\end{proof}

Next, we come to prove (2) and (3) of Corollary \ref{cor-main}
\begin{cor}
Let $\kappa_n(t,r(x,y))$ be the heat kernel of $\sph^n$. Then,
\begin{enumerate}
\item $\kappa_{n+2}(t,0)=-\frac{e^{nt}}{2n\pi}\p_t\kappa_n(t,0)$.
\item $\kappa_{n+2}(t,\pi)=\frac{e^{nt}}{2n\pi}\p_t\kappa_n(t,\pi)$.
\end{enumerate}
\end{cor}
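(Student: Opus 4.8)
The plan is to mimic the proof of Corollary \ref{cor-re-h-trace}, replacing $\coth r$ by $\cot r$ and the single pole $r=0$ by the two poles $r=0$ and $r=\pi$ of $\sph^n$. The only inputs are the radial heat equation \eqref{eq-h-s-1}, the recurrence \eqref{eq-s-re-s2} just established, and the smoothness facts $\p_r\kappa_n(t,0)=\p_r\kappa_n(t,\pi)=0$ together with the smoothness of $\frac{1}{\sin r}\p_r\kappa_n$ recorded in the previous proof via \cite[Proposition 2.7]{KW}. In each case I first convert $\p_t\kappa_n$ into a pure second radial derivative at the pole, and then read off $\kappa_{n+2}$ at the pole from \eqref{eq-s-re-s2}.

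For (2) I evaluate \eqref{eq-h-s-1} at $r=0$. Since $\kappa_n$ is smooth and $\p_r\kappa_n(t,0)=0$, a Taylor expansion gives $\p_r\kappa_n(t,r)=r\,\p_r^2\kappa_n(t,0)+o(r)$, while $\cot r=\frac1r+O(r)$, so $\cot r\,\p_r\kappa_n(t,r)\to\p_r^2\kappa_n(t,0)$ as $r\to0^+$. Hence the ostensibly singular term in \eqref{eq-h-s-1} is regular at the pole and I obtain $\p_t\kappa_n(t,0)=n\,\p_r^2\kappa_n(t,0)$. On the other hand, letting $r\to0^+$ in \eqref{eq-s-re-s2} and using $\frac{1}{\sin r}\p_r\kappa_n(t,r)\to\p_r^2\kappa_n(t,0)$ (again by Taylor, since $\sin r=r+O(r^3)$) yields $\kappa_{n+2}(t,0)=-\frac{e^{nt}}{2\pi}\p_r^2\kappa_n(t,0)$. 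Combining the two identities gives (2).

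For (3) I repeat the computation at $r=\pi$, where the whole point is to track the signs. Writing $u=\pi-r$, one has $\cot r=-\cot u=-\frac1u+O(u)$ and $\sin r=\sin u=u+O(u^3)$, while $\p_r\kappa_n(t,\pi)=0$ gives $\p_r\kappa_n(t,r)=-u\,\p_r^2\kappa_n(t,\pi)+o(u)$. Consequently $\cot r\,\p_r\kappa_n\to\p_r^2\kappa_n(t,\pi)$, so evaluating \eqref{eq-h-s-1} at $r=\pi$ again gives $\p_t\kappa_n(t,\pi)=n\,\p_r^2\kappa_n(t,\pi)$, whereas $\frac{1}{\sin r}\p_r\kappa_n\to-\p_r^2\kappa_n(t,\pi)$, which carries an extra minus sign relative to the $r=0$ case. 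Feeding this into \eqref{eq-s-re-s2} produces $\kappa_{n+2}(t,\pi)=+\frac{e^{nt}}{2\pi}\p_r^2\kappa_n(t,\pi)$, and combining with the heat-equation identity gives the claimed $+$ sign in (3).

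The only delicate point is the passage to the limit at the poles: one must justify that the products $\cot r\,\p_r\kappa_n$ and $\frac{1}{\sin r}\p_r\kappa_n$ have genuine limits as $r\to0^+$ or $r\to\pi^-$ rather than merely formal ones. This is exactly what the smoothness of $\kappa_n(t,r(x,y))$ on $\sph^n$, and in particular the evenness of $r\mapsto\kappa_n(t,r)$ about $r=0$ and $r=\pi$ guaranteed by \cite[Proposition 2.7]{KW}, provides, so no analytic input beyond the previous proof is needed. I therefore expect the sign bookkeeping at $r=\pi$ to be the only step distinguishing (3) from (2), and the main thing to verify carefully.
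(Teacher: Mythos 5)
Your proposal is correct and follows exactly the paper's route: the paper's proof likewise consists of setting $r=0,\pi$ in the radial heat equation $\p_t\kappa_n-[\p_r^2\kappa_n+(n-1)\cot r\,\p_r\kappa_n]=0$ and in the recurrence \eqref{eq-s-re-s2}, just as in the hyperbolic Corollary \ref{cor-re-h-trace}. Your explicit Taylor-expansion justification of the limits $\cot r\,\p_r\kappa_n\to\p_r^2\kappa_n$ and $\frac{1}{\sin r}\p_r\kappa_n\to\pm\p_r^2\kappa_n$, and the resulting sign flip at $r=\pi$, merely spells out details the paper leaves implicit.
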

\begin{proof} Similarly as in the proof of Corollary \ref{cor-re-h-trace},
setting $r=0,\pi$ in
\begin{equation}
\p_t\kappa_n-[\p_r^2\kappa_n+(n-1)\cot r\p_r\kappa_n]=0
\end{equation}
and \eqref{eq-s-re-s2} will give us the conclusion.
\end{proof}
For the proof of  \eqref{eq-s-re-s1}, we need the following lemmas.
\begin{lem}\label{lem-sphere-1}
Let $\kappa_n(t,r(x,y))$ be the heat kernel of $\sph^n$. Then,
\begin{equation*}
\begin{split}
&(\p_t-\Delta_n)\int_{r}^\pi\frac{\kappa_{n+1}(t,\rho)\sin\rho}{(\cos r-\cos \rho)^\frac12}d\rho\\
=&\frac{2n-1}{4}\int_{r}^\pi\frac{\kappa_{n+1}(t,\rho)\sin\rho}{(\cos r-\cos \rho)^\frac12}d\rho-(n-1)(1+\cos r)^{-\frac12}\kappa_{n+1}(t,\pi).
\end{split}
\end{equation*}
\end{lem}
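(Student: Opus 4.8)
The plan is to mimic the direct computation carried out above for the hyperbolic relation \eqref{eq-h-re-s1}, but to track carefully the boundary contribution coming from the antipodal point $\rho=\pi$, which is the source of the extra term. First I would pass to the variable $\sigma=\cos r$ and set $s=\cos\rho$, $\xi=\sigma-s$. Since $\sin\rho\,d\rho=-ds$ and $\rho$ running from $r$ to $\pi$ corresponds to $s$ running from $\sigma$ down to $-1$, the integral becomes
\[
J(t,\sigma):=\int_{r}^\pi\frac{\kappa_{n+1}(t,\rho)\sin\rho}{(\cos r-\cos\rho)^\frac12}d\rho=\int_0^{\sigma+1}\xi^{-\frac12}\kappa_{n+1}(t,\sigma-\xi)\,d\xi,
\]
where $\kappa_{n+1}$ is viewed as a function of its cosine variable. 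This substitution moves the singularity of the kernel to the fixed endpoint $\xi=0$ and records all of the $\sigma$-dependence in the integrand together with the moving upper limit $\xi=\sigma+1$, which corresponds precisely to $\rho=\pi$.

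Next I would apply $\p_t-\Delta_n$ with $\Delta_n=(1-\sigma^2)\p_\sigma^2-n\sigma\p_\sigma$, as in \eqref{eq-h-s-1'}. Using the heat equation \eqref{eq-h-s-1'} for $\kappa_{n+1}$ to rewrite $\p_t\kappa_{n+1}$ in terms of its $s$-derivatives, the interior contribution should reduce, exactly as in the hyperbolic computation, to an integral of $\xi^{-\frac12}[(2\sigma\xi-\xi^2)\p_\xi^2+(\sigma-(n+1)\xi)\p_\xi]\kappa_{n+1}(t,\sigma-\xi)$, after using that $\kappa_{n+1}(t,\sigma-\xi)$ depends on $(\sigma,\xi)$ only through $\sigma-\xi$, so that $\p_\sigma=-\p_\xi$ acting on it. The essential difference from the hyperbolic case is that here differentiating the moving upper limit $\xi=\sigma+1$ by the Leibniz rule produces boundary terms at the antipode involving both $\kappa_{n+1}(t,\pi)$ and $\p_s\kappa_{n+1}(t,\pi)$.

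Then I would integrate by parts twice in $\xi$. The first integration by parts on the second-order term generates a boundary contribution at $\xi=\sigma+1$ proportional to $\p_s\kappa_{n+1}(t,\pi)$, the bulk collapses to a multiple of $\int_0^{\sigma+1}\xi^{\frac12}\p_\xi\kappa_{n+1}\,d\xi$, and a second integration by parts returns $\frac{2n-1}{4}J$ together with a further boundary term proportional to $\kappa_{n+1}(t,\pi)$. The main obstacle, and the heart of the computation, is the bookkeeping at the moving endpoint: I expect the two contributions proportional to $\p_s\kappa_{n+1}(t,\pi)$ — one from the Leibniz differentiation of the limit and one from the integration by parts — to cancel exactly, while the several contributions proportional to $\kappa_{n+1}(t,\pi)$ combine, after simplifying the factors $(1\pm\sigma)$ and $(\sigma+1)^{\pm\frac12}$, to the single coefficient $-(n-1)(1+\sigma)^{-\frac12}$. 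Throughout I would use that $\kappa_{n+1}(t,r(x,y))$ is smooth on $\sph^{n+1}$, so that $\frac{1}{\sin\rho}\p_\rho\kappa_{n+1}$, hence $\p_s\kappa_{n+1}$, is finite at $\rho=\pi$ (by \cite[Proposition 2.7]{KW}), and that the weights $\xi^{\frac12}$ and $\xi^{\frac32}$ kill every boundary term at the fixed endpoint $\xi=0$. Substituting back $\sigma=\cos r$ then yields the stated identity.
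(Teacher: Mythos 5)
Your proposal is correct and follows essentially the same route as the paper's own proof: the identical substitution $\sigma=\cos r$, $s=\cos\rho$, $\xi=\sigma-s$, the use of the heat equation \eqref{eq-h-s-1'} for $\kappa_{n+1}$ to produce the interior operator $(2\sigma\xi-\xi^2)\p_\xi^2+(\sigma-(n+1)\xi)\p_\xi$, and the two integrations by parts with the moving endpoint $\xi=1+\sigma$. The cancellations you anticipate do occur exactly as claimed: the Leibniz boundary term $-(1-\sigma)(1+\sigma)^{1/2}\p_\sigma\kappa_{n+1}(t,\pi)$ cancels against the first integration-by-parts boundary term, and the remaining $\kappa_{n+1}(t,\pi)$ contributions combine to $-(n-1)(1+\sigma)^{-1/2}\kappa_{n+1}(t,\pi)$, which is precisely the paper's computation.
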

\begin{proof}
(1) Let $\sigma=\cos r$, $s=\cos\rho$ and $\xi=\sigma-s$. Then,
\begin{equation*}
\begin{split}
\int_{r}^\pi\frac{\kappa_{n+1}(t,\rho)\sin\rho}{(\cos r-\cos \rho)^\frac12}d\rho=\int_{-1}^\sigma\frac{\kappa_{n+1}(t,s)}{(\sigma-s)^\frac12}ds=\int_{0}^{1+\sigma}\frac{\kappa_{n+1}(t,\sigma-\xi)}{\xi^\frac12}d\xi.\\
\end{split}
\end{equation*}
Hence,
\begin{equation*}
\begin{split}
&(\p_t-\Delta_n)\int_{r}^\pi\frac{\kappa_{n+1}(t,\rho)\sin\rho}{(\cos r-\cos \rho)^\frac12}d\rho\\
=&\{\p_t-[(1-\sigma^2)\p_\sigma^2-n\sigma\p_\sigma]\}\int_{0}^{1+\sigma}\frac{\kappa_{n+1}(t,\sigma-\xi)}{\xi^\frac12}d\xi\\
=&\int_{0}^{1+\sigma}\frac{\p_t\kappa_{n+1}(t,\sigma-\xi)}{\xi^\frac12}d\xi+\left(\frac12+\left(n-\frac12\right)\sigma\right)(1+\sigma)^{-\frac12}\kappa_{n+1}(t,\pi)\\
&-(1-\sigma)(1+\sigma)^\frac12\p_\sigma\kappa_{n+1}(t,\pi)-\int_{0}^{1+\sigma}\frac{[(1-\sigma^2)\p_\sigma^2-n\sigma\p_\sigma]\kappa_{n+1}(t,\sigma-\xi)}{\xi^\frac12}d\xi\\
=&\int_{0}^{1+\sigma}\frac{[(1-(\sigma-\xi)^2)\p_\sigma^2-(n+1)(\sigma-\xi)\p_\sigma]\kappa_{n+1}(t,\sigma-\xi)}{\xi^\frac12}d\xi\\
&+\left(\frac12+\left(n-\frac12\right)\sigma\right)(1+\sigma)^{-\frac12}\kappa_{n+1}(t,\pi)-\int_{0}^{1+\sigma}\frac{[(1-\sigma^2)\p_\sigma^2-n\sigma\p_\sigma]\kappa_{n+1}(t,\sigma-\xi)}{\xi^\frac12}d\xi\\
&-(1-\sigma)(1+\sigma)^\frac12\p_\sigma\kappa_{n+1}(t,\pi)\\
=&\int_{0}^{1+\sigma}\frac{[(2\sigma\xi-\xi^2)\p_\sigma^2+((n+1)\xi-\sigma)\p_\sigma]\kappa_{n+1}(t,\sigma-\xi)}{\xi^\frac12}d\xi\\
&+\left(\frac12+\left(n-\frac12\right)\sigma\right)(1+\sigma)^{-\frac12}\kappa_{n+1}(t,\pi)-(1-\sigma)(1+\sigma)^\frac12\p_\sigma\kappa_{n+1}(t,\pi)\\
=&\int_0^{1+\sigma}\left(-\sigma\xi^{-\frac12}+\frac32\xi^\frac12\right)\p_\xi\kappa_{n+1}(t,\sigma-\xi)d\xi-\int_{0}^{1+\sigma}\xi^{-\frac12}((n+1)\xi-\sigma)\p_\xi\kappa_{n+1}(t,\sigma-\xi)d\xi\\
&+\left(\frac12+\left(n-\frac12\right)\sigma\right)(1+\sigma)^{-\frac12}\kappa_{n+1}(t,\pi)\\
=&-(n-\frac12)\int_0^{1+\sigma}\xi^\frac12\p_\xi\kappa_{n+1}(t,\sigma-\xi)d\xi+\left(\frac12+\left(n-\frac12\right)\sigma\right)(1+\sigma)^{-\frac12}\kappa_{n+1}(t,\pi)\\
=&\frac{2n-1}{4}\int_{r}^\pi\frac{\kappa_{n+1}(t,\rho)\sin\rho}{(\cos r-\cos \rho)^\frac12}d\rho-(n-1)(1+\sigma)^{-\frac12}\kappa_{n+1}(t,\pi).
\end{split}
\end{equation*}
\end{proof}
\begin{lem}\label{lem-sphere-2}
Let $\kappa_n(t,r(x,y))$ be the heat kernel of $\sph^n$. Then,
\begin{equation*}
\sqrt 2\omega_{n-1}\int_0^\pi\int_{r}^\pi \frac{\kappa_{n+1}(t,\rho)\sin\rho}{(\cos r-\cos \rho)^\frac12}d\rho f(r)\sin^{n-1}rdr\to f(0)
\end{equation*}
as $t\to 0^+$ for any smooth function $f$.
\end{lem}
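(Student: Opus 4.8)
The plan is to mimic the final approximate-identity computation carried out for the hyperbolic case at the end of Section 2. First I would apply Fubini's theorem to interchange the order of integration. Since the region $\{0\le r\le\pi,\ r\le\rho\le\pi\}$ coincides with $\{0\le r\le\rho\le\pi\}$, the double integral becomes $\sqrt 2\,\omega_{n-1}\int_0^\pi\kappa_{n+1}(t,\rho)\sin\rho\big(\int_0^\rho \frac{f(r)\sin^{n-1}r}{(\cos r-\cos\rho)^{1/2}}\,dr\big)\,d\rho$. Before interchanging I would record that for each fixed $\rho\in(0,\pi)$ the inner $r$-integral is finite: near $r=\rho$ one has $\cos r-\cos\rho\sim(\sin\rho)(\rho-r)$, so the singularity of the integrand is of order $(\rho-r)^{-1/2}$, which is integrable.

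Next I would rewrite the result in the form $\int_0^\pi\kappa_{n+1}(t,\rho)\,G(\rho)\,\omega_n\sin^n\rho\,d\rho$, where $G(\rho)=\frac{\sqrt 2\,\omega_{n-1}}{\omega_n}\sin^{1-n}\rho\int_0^\rho\frac{f(r)\sin^{n-1}r}{(\cos r-\cos\rho)^{1/2}}\,dr$. Because $\kappa_{n+1}$ is the heat kernel of $\sph^{n+1}$, it acts as an approximate identity whose mass concentrates at $\rho=0$ as $t\to0^+$; hence the integral should tend to the boundary value $\lim_{\rho\to0^+}G(\rho)$.

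The final step is to evaluate this limit. Substituting $x=1-\cos r$, $y=1-\cos\rho$ and $z=x/y$, and using $\sin^2 r=x(2-x)$, the bracketed quantity reduces, after all powers of $y$ cancel, to $(2-y)^{(1-n)/2}\int_0^1 z^{(n-2)/2}(1-z)^{-1/2}(2-yz)^{(n-2)/2}\,dz$, which converges as $y\to0^+$ to $\frac1{\sqrt2}B(\frac n2,\frac12)\,f(0)$. Writing $\omega_{n-1}=\frac{2\pi^{n/2}}{\Gamma(n/2)}$ and $\omega_n=\frac{2\pi^{(n+1)/2}}{\Gamma((n+1)/2)}$ and using $B(\frac n2,\frac12)=\frac{\Gamma(n/2)\Gamma(1/2)}{\Gamma((n+1)/2)}$ shows $\frac{\sqrt2\,\omega_{n-1}}{\omega_n}\cdot\frac1{\sqrt2}B(\frac n2,\frac12)=1$, so $\lim_{\rho\to0^+}G(\rho)=f(0)$, as desired. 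This is exactly the Beta-function reduction used in the hyperbolic proof, with $\sinh,\cosh$ replaced by $\sin,\cos$.

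I expect the main obstacle to be the rigorous justification that the heat-kernel integral actually converges to $G(0^+)$, since $G$ is only known to possess a limit at $\rho=0$ rather than to be smooth on all of $[0,\pi]$. I would handle this by splitting $[0,\pi]=[0,\delta]\cup[\delta,\pi]$: on $[\delta,\pi]$ the off-diagonal decay $\kappa_{n+1}(t,\rho)\to0$ uniformly for $\rho\ge\delta$, together with the boundedness of $G$ there, kills that piece; on $[0,\delta]$ the near-origin estimate $G(\rho)\to f(0)$ combined with the fact that the heat-kernel mass $\int_0^\delta\kappa_{n+1}(t,\rho)\,\omega_n\sin^n\rho\,d\rho$ tends to $1$ yields the contribution $f(0)$. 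This is the approximate-identity argument that is used tacitly in the hyperbolic computation of Section 2.
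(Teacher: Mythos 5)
Your proposal is correct and follows essentially the same route as the paper's proof: Fubini to swap the order of integration, recognition of the resulting expression as the heat kernel $\kappa_{n+1}$ acting on a radial function against the volume element $\omega_n\sin^n\rho\,d\rho$, and the substitution $x=1-\cos r$, $y=1-\cos\rho$, $z=x/y$ reducing the boundary limit to $\frac{1}{\sqrt 2}B\left(\frac12,\frac n2\right)=\frac{\omega_n}{\sqrt 2\,\omega_{n-1}}$. Your final paragraph making the approximate-identity step rigorous (the $[0,\delta]\cup[\delta,\pi]$ splitting) is a detail the paper leaves tacit, but it is not a different method.
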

\begin{proof}
Note that
\begin{equation}
\begin{split}
&\omega_{n-1}\int_0^\pi\int_{r}^\pi\frac{\kappa_{n+1}(t,\rho)\sin\rho}{(\cos r-\cos \rho)^\frac12}d\rho f(r)\sin^{n-1}rdr\\
=&\omega_{n-1}\int_0^\pi\kappa_{n+1}(t,\rho)\sin\rho\int_{0}^\rho\frac{f(r)\sin^{n-1}r}{(\cos r-\cos \rho)^\frac12}dr d\rho \\
=&\frac{\omega_{n-1}}{\omega_n}\lim_{\rho\to 0}\sin^{1-n}\rho\int_{0}^\rho\frac{f(r)\sin^{n-1}r}{(\cos r-\cos \rho)^\frac12}dr.
\end{split}
\end{equation}
Moreover, let $\sigma=1-\cos \rho$ and $s=1-\cos r$, $\xi=\frac{s}{\sigma}$, we have,
\begin{equation}
\begin{split}
&\sin^{1-n}\rho\int_{0}^\rho\frac{\sin^{n-1}r}{(\cos r-\cos \rho)^\frac12}dr\\
=&[1-(1-\sigma)^2]^\frac{1-n}{2}\int_0^\sigma\frac{[1-(1-s)^2]^\frac{n-2}{2}}{(\sigma-s)^\frac12}ds\\
=&[\sigma(2-\sigma)]^\frac{1-n}{2}\int_0^\sigma\frac{[s(2-s)]^\frac{n-2}{2}}{(\sigma-s)^\frac12}ds\\
=&(2-\sigma)^\frac{1-n}{2}\int_0^1\frac{\xi^\frac{n-2}{2}(2-\sigma\xi)^\frac{n-2}{2}}{(1-\xi)^\frac12}d\xi\\
\to&\frac{1}{\sqrt 2}B(1/2,n/2)\\
=&\frac{\omega_n}{\sqrt 2\omega_{n-1}}.\\
\end{split}
\end{equation}
This gives us the conclusion.

\end{proof}
We are now ready to prove \eqref{eq-s-re-s1}.
\begin{thm}
Let $k_n(t,r(x,y))$ be the heat kernel of $\sph^n$. Then
\begin{equation}
\begin{split}
\kappa_n=&\sqrt 2e^{-\frac{2n-1}{4}t}\int_{r}^\pi\frac{\kappa_{n+1}(t,\rho)\sin\rho}{(\cos r-\cos \rho)^\frac12}d\rho\\
&+(n-1)2^{n-1}\omega_{n-2}\int_0^te^{-\frac{2n-1}{4}s}\kappa_{n+1}(s,\pi)\int_0^\pi\sin^{n-1}\left(\frac\rho 2\right)\cos^{n-2}\left(\frac\rho 2\right)\\
&\int_0^\pi\kappa_{n}(t-s,\arccos(\cos r\cos\rho+\sin r\sin\rho\cos\theta))\sin^{n-2}\theta d\theta d\rho ds
\end{split}
\end{equation}
for $n=2,3,\cdots$
\end{thm}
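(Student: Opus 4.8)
The plan is to recognise the right-hand side as the solution, via Duhamel's principle, of an inhomogeneous heat equation whose source is exactly the boundary term produced in Lemma \ref{lem-sphere-1}. Set
\begin{equation*}
u(t,r):=\sqrt 2e^{-\frac{2n-1}{4}t}\int_{r}^\pi\frac{\kappa_{n+1}(t,\rho)\sin\rho}{(\cos r-\cos \rho)^\frac12}d\rho.
\end{equation*}
Multiplying the identity of Lemma \ref{lem-sphere-1} by $\sqrt 2e^{-\frac{2n-1}{4}t}$ and using $\p_t(e^{-\frac{2n-1}{4}t}I)=e^{-\frac{2n-1}{4}t}(\p_tI-\frac{2n-1}{4}I)$, the two $\frac{2n-1}{4}$-terms cancel and we obtain
\begin{equation*}
(\p_t-\Delta_n)u=-\sqrt 2(n-1)e^{-\frac{2n-1}{4}t}(1+\cos r)^{-\frac12}\kappa_{n+1}(t,\pi)=:-G(t,r).
\end{equation*}
By Lemma \ref{lem-sphere-2}, $u$ has the same short-time behaviour as the heat kernel, i.e. $\int_0^\pi u(t,r)f(r)\omega_{n-1}\sin^{n-1}rdr\to f(0)$ as $t\to0^+$.

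Therefore $w:=\kappa_n-u$ solves $(\p_t-\Delta_n)w=G$ with vanishing initial data, so Duhamel's principle on $\sph^n$ gives
\begin{equation*}
w(t,x)=\int_0^t\int_{\sph^n}\kappa_n\big(t-s,d(x,z)\big)\,G\big(s,d(p,z)\big)\,dV(z)\,ds,
\end{equation*}
where $p$ is the pole (the source of $\kappa_n$) and $r=d(p,x)$. It then remains to evaluate this integral and match it with the correction term.

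To do so I would use geodesic polar coordinates on $\sph^n$ centred at $p$: write $z=(\rho,\omega)$ with $\rho=d(p,z)\in[0,\pi]$ and $\omega\in\sph^{n-1}$, so that $dV(z)=\sin^{n-1}\rho\,d\rho\,d\omega$ and $G(s,d(p,z))=\sqrt 2(n-1)e^{-\frac{2n-1}{4}s}(1+\cos\rho)^{-\frac12}\kappa_{n+1}(s,\pi)$ depends only on $\rho$. Decomposing $\omega$ by the angle $\theta\in[0,\pi]$ between $\omega$ and the direction from $p$ to $x$ (so $d\omega=\sin^{n-2}\theta\,d\theta\,d\omega'$ with $\omega'\in\sph^{n-2}$), the spherical law of cosines yields $d(x,z)=\arccos(\cos r\cos\rho+\sin r\sin\rho\cos\theta)$, while the residual $\sph^{n-2}$ integrates to $\omega_{n-2}$, producing $\omega_{n-2}\int_0^\pi(\cdots)\sin^{n-2}\theta\,d\theta$. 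Finally the half-angle identities $1+\cos\rho=2\cos^2(\rho/2)$ and $\sin\rho=2\sin(\rho/2)\cos(\rho/2)$ give $(1+\cos\rho)^{-\frac12}\sin^{n-1}\rho=2^{n-1}\cdot2^{-\frac12}\sin^{n-1}(\rho/2)\cos^{n-2}(\rho/2)$; combined with the prefactor $\sqrt 2(n-1)\omega_{n-2}$ this collapses to $(n-1)2^{n-1}\omega_{n-2}$, which is precisely the claimed correction term, and $\kappa_n=u+w$ is the assertion.

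The main obstacle is the rigorous justification of the Duhamel representation, since the source $G$ is singular at the antipodal point $r=\pi$ (there $(1+\cos r)^{-\frac12}\sim\sqrt 2/(\pi-r)$). One must verify both that $u$ is a genuine solution of the inhomogeneous equation despite this singularity and that the inhomogeneous problem with zero initial data has a unique solution, namely the convolution above. This is exactly where the hypothesis $n\ge2$ enters: the singular locus $\{r=\pi\}$ is a single point, of codimension $n\ge2$, so the singularity of $G$ is removable and contributes no spurious source (equivalently, after the half-angle reduction the spatial density $\sin^{n-1}(\rho/2)\cos^{n-2}(\rho/2)$ is integrable near $\rho=\pi$); for $n=1$ a point has codimension one, removability fails, and the identity breaks down, consistent with the remark following the statement.
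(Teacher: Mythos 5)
Your proposal is correct and follows essentially the same route as the paper: both rest on Lemma \ref{lem-sphere-1} and Lemma \ref{lem-sphere-2}, interpret the correction term as the Duhamel solution of the inhomogeneous heat equation whose source is the antipodal boundary term $G$, evaluate that convolution via geodesic polar coordinates, the spherical law of cosines and half-angle identities, and handle the singularity at the antipodal point by the removable-singularity results for $n\ge 2$ that the paper cites. The only difference is the direction of the argument---the paper assembles the right-hand side (integral term plus Duhamel term $v$) and verifies it is the heat kernel, whereas you subtract the integral term from $\kappa_n$ and identify the remainder with the Duhamel convolution---which is a rearrangement of the same proof, not a different one.
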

\begin{proof}
Let $v$ be the solution of the Cauchy problem:
\begin{equation}
\left\{\begin{array}{l}v_t-\Delta_n v=(n-1)e^{-\frac{2n-1}{4}t}\kappa_{n+1}(t,\pi)\sec\left(\frac{r(o,x)}{2}\right)\\
v(0,x)=0
\end{array}\right.
\end{equation}
on $\sph^n$. Here $o$ is a fixed point in $\sph^n$. By Duhamel's principle,
\begin{equation}
\begin{split}
v(x)=(n-1)\int_0^te^{-\frac{2n-1}{4}s}\kappa_{n+1}(s,\pi)\int_{\sph^{n}}\kappa_n(t-s,r(x,y))\sec\left(\frac{r(o,y)}{2}\right)dV(y)ds.
\end{split}
\end{equation}
It is clear that $v$ is rotationally symmetric with respect to $o$. Moreover, by the spherical law of cosines, one has
\begin{equation}
\begin{split}
v(x)=&(n-1)\omega_{n-2}\int_0^te^{-\frac{2n-1}{4}s}\kappa_{n+1}(s,\pi)\int_0^\pi\sec\left(\frac{\rho}{2}\right)\sin^{n-1}\rho\\
&\int_0^\pi\kappa_{n}(t-s,\arccos(\cos r\cos\rho+\sin r\sin\rho\cos\theta))\sin^{n-2}\theta d\theta d\rho ds\\
=&(n-1)2^{n-1}\omega_{n-2}\int_0^te^{-\frac{2n-1}{4}s}\kappa_{n+1}(s,\pi)\int_0^\pi\sin^{n-1}\left(\frac{\rho}{2}\right)\cos^{n-2}\left(\frac{\rho}{2}\right)\\
&\int_0^\pi\kappa_{n}(t-s,\arccos(\cos r\cos\rho+\sin r\sin\rho\cos\theta))\sin^{n-2}\theta d\theta d\rho ds\\
\end{split}
\end{equation}
where $r=r(o,x)$. Moreover, by Lemma \ref{lem-sphere-1}, the function
\begin{equation}
u(t,x)=\sqrt 2e^{-\frac{2n-1}{4}t}\int_{r}^\pi\frac{\kappa_{n+1}(t,\rho)\sin\rho}{(\cos r-\cos \rho)^\frac12}d\rho+v(x)
\end{equation}
satisfies the heat equation on $\sph^n\setminus \{o'\}$ where $o'$ is the antipodal point of $o$. Note that $u$ is continuous, by removability of singularity of the heat equation (see \cite{Hs,Hu,NS,Sk}), we know that $u$ is smooth. Moreover, by Lemma \ref{lem-sphere-2}, $u(t,x)\to \delta_o$ as $t\to 0^+$. This completes the proof of the theorem.
\end{proof}

\section{Heat trace of odd dimensional hyperbolic spaces and spheres}
In this section, by using Corollary \ref{cor-main}, we prove Theorem \ref{thm-heat-trace}.

We first prove (1) of Theorem \ref{thm-heat-trace}.
\begin{proof}[Proof of (1) of Theorem \ref{thm-heat-trace}]
Suppose that
\begin{equation}
K_{2m+1}(t,0)=(4\pi t)^{-\frac{2m+1}{2}}e^{-m^2t}P_m(t).
\end{equation}
Then, by (1) of Corollary \ref{cor-main},
\begin{equation}
P_m=\left(1+\frac{2(m-1)^2}{2m-1}t\right)P_{m-1}-\frac{2t}{2m-1}P'_{m-1}
\end{equation}
with $P_0=1$. Let
\begin{equation}
Q_m=\Gamma\left(m+\frac12\right)P_m.
\end{equation}
Then
\begin{equation}\label{eq-Q-re}
Q_m=\left(m-\frac12+(m-1)^2t\right)Q_{m-1}-tQ'_{m-1}.
\end{equation}
Let
\begin{equation}
F_m(z)=\prod_{k=0}^{m-1}(1+k^2z)=\sum_{k=0}^{m-1}c_{m,k}z^k.
\end{equation}
We claim that
\begin{equation}
Q_m=\sum_{k=0}^{m-1}\frac{\Gamma(m-k+\frac12)t^k}{2\pi\ii}\int_{C}\frac{F_m(z)}{z^{k+1}}dz=\sum_{k=0}^{m-1}\Gamma\left(m-k+\frac12\right)c_{m,k}t^k,
\end{equation}
where $C$ is the unit circle. We will show this by induction. It is clearly true for $m=1$ and suppose it is true for $m-1$,  by \eqref{eq-Q-re},
\begin{equation}
\begin{split}
Q_m=&\left(m-\frac12+(m-1)^2t\right)Q_{m-1}-tQ'_{m-1}\\
=&\left(m-\frac12+(m-1)^2t\right)\sum_{k=0}^{m-2}\frac{\Gamma(m-k-\frac12)t^k}{2\pi\ii}\int_{C}\frac{F_{m-1}(z)}{z^{k+1}}dz\\
&-t\sum_{k=0}^{m-2}\frac{\Gamma(m-k-\frac12)kt^{k-1}}{2\pi\ii}\int_{C}\frac{F_{m-1}(z)}{z^{k+1}}dz\\
=&\sum_{k=0}^{m-1}\frac{t^k}{2\pi\ii}\int_{C}\left(\frac{(m-k-\frac12)\Gamma(m-k-\frac12)}{z^{k+1}}+\frac{(m-1)^2\Gamma(m-k+\frac12)}{z^k}\right)F_{m-1}(z)dz\\
=&\sum_{k=0}^{m-1}\frac{\Gamma(m-k+\frac12)t^k}{2\pi\ii}\int_{C}\frac{F_m(z)}{z^{k+1}}dz.
\end{split}
\end{equation}
So,
\begin{equation}
P_m=\sum_{k=0}^{m-1}\frac{\Gamma(m-k+\frac12)}{\Gamma(m+\frac12)}c_{m,k}t^k.
\end{equation}
This completes the proof.
\end{proof}

We next come to prove (2) of Theorem \ref{thm-heat-trace}. Note that
\begin{equation}\label{eq-s-k-1-0}
\kappa_1(t,0)=(4\pi t)^{-\frac12}\left(1+2\sum_{k=1}^\infty e^{-\frac{k^2\pi^2}{t}}\right).
\end{equation}
Because $t^{m}e^{-\frac{k^2\pi^2}{t}}$ tends to $0$ exponentially as $t\to 0^+$ for any constant $m$ and any positive constant $k$, by (2) of Corollary \ref{cor-main}, the terms $e^{-\frac{k^2\pi^2}{t}}$ with $k=1,2,\cdots$ in \eqref{eq-s-k-1-0} have no contribution to the heat trace asymptotic as $t\to 0^+$ for odd dimensional sphere. So, to compute the heat trace asymptotic for odd dimensional sphere, we only need to take care of $(4\pi t)^{-\frac12}$ in the expression \eqref{eq-s-k-1-0} of $\kappa_1(t,0)$.
\begin{proof}[Proof of (2) of Theorem \ref{thm-heat-trace}]

Let $\bar\kappa_1(t)=(4\pi t)^{-\frac 12}$ and
\begin{equation}\label{eq-re-b-k}
\bar\kappa_{n+2}=-\frac{e^{nt}}{2n\pi}\p_t\bar\kappa_n.
\end{equation}
Then, $\bar\kappa_{2m+1}$ is the heat trace asymptotic for $\sph^{2m+1}$. Suppose that
\begin{equation}
\bar\kappa_{2m+1}(t)=(4\pi t)^{-\frac{2m+1}{2}}e^{m^2t}p_m(t).
\end{equation}
Then, by \eqref{eq-re-b-k}, we know that
\begin{equation}
p_m=\left(1-\frac{2(m-1)^2}{2m-1}t\right)p_{m-1}-\frac{2t}{2m-1}p'_{m-1}
\end{equation}
with $p_0=1$. Let $q_m=\Gamma(m+\frac{1}2)p_m$. Then,
\begin{equation}\label{eq-q-re}
q_m=\left(m-\frac12-(m-1)^2t\right)q_{m-1}-tq'_{m-1}
\end{equation}
with $q_0=\Gamma(\frac12)$. Let
\begin{equation}
\tilde F_m(z)=\prod_{k=0}^{m-1}(1-k^2z)=\sum_{k=0}^{m-1}(-1)^kc_{m,k}z^k.
\end{equation}
We claim that
\begin{equation}
q_m=\sum_{k=0}^{m-1}\frac{\Gamma(m-k+\frac12)t^k}{2\pi\ii}\int_{C}\frac{\tilde F_m(z)}{z^{k+1}}dz=\sum_{k=0}^{m-1}(-1)^k\Gamma\left(m-k+\frac12\right) c_{m,k}t^k,
\end{equation}
where $C$ is the unit circle. We will show this by induction. It clearly true for $m=1$ and suppose it is true for $m-1$, by \eqref{eq-q-re},
\begin{equation}
\begin{split}
q_m=&\left(m-\frac12-(m-1)^2t\right)q_{m-1}-tq'_{m-1}\\
=&\left(m-\frac12-(m-1)^2t\right)\sum_{k=0}^{m-2}\frac{\Gamma(m-k-\frac12)t^k}{2\pi\ii}\int_{C}\frac{\tilde F_{m-1}(z)}{z^{k+1}}dz\\
&-t\sum_{k=0}^{m-2}\frac{\Gamma(m-k-\frac12)kt^{k-1}}{2\pi\ii}\int_{C}\frac{\tilde F_{m-1}(z)}{z^{k+1}}dz\\
=&\sum_{k=0}^{m-1}\frac{t^k}{2\pi\ii}\int_{C}\left(\frac{(m-k-\frac12)\Gamma(m-k-\frac12)}{z^{k+1}}-\frac{(m-1)^2\Gamma(m-k+\frac12)}{z^k}\right)\tilde F_{m-1}(z)dz\\
=&\sum_{k=0}^{m-1}\frac{\Gamma(m-k+\frac12)t^k}{2\pi\ii}\int_{C}\frac{\tilde F_m(z)}{z^{k+1}}dz.
\end{split}
\end{equation}
So,
\begin{equation}
p_m=\sum_{k=0}^{m-1}(-1)^k\frac{\Gamma\left(m-k+\frac12\right)}{\Gamma(m+\frac12)}c_{m,k}t^k.
\end{equation}
and
\begin{equation}
\begin{split}
\bar \kappa_{2m+1}=&(4\pi t)^{-\frac{2m+1}2}e^{m^2t}p_m\\
=&(4\pi t)^{-\frac{2m+1}2}\sum_{k=0}^\infty \frac{m^{2k}}{k!}t^k\sum_{k=0}^{m-1}(-1)^k\frac{\Gamma\left(m-k+\frac12\right)}{\Gamma(m+\frac12)}c_{m,k}t^k\\
=&(4\pi t)^{-\frac{2m+1}2}\sum_{k=0}^{\infty}\sum_{l=0}^{k}(-1)^l\frac{m^{2k-2l}\Gamma\left(m-l+\frac12\right)}{\Gamma(m+\frac12)(k-l)!}c_{m,l}t^k
\end{split}
\end{equation}
So the heat coefficients of $\sph^{2m+1}$ are
\begin{equation}
\begin{split}
a_k=&\omega_{2m+1}(4\pi)^{-\frac{2m+1}2}\sum_{l=0}^{k}(-1)^l\frac{m^{2k-2l}\Gamma\left(m-l+\frac12\right)}{\Gamma(m+\frac12)(k-l)!}c_{m,l}\\
=&\sum_{l=0}^k(-1)^l\frac{m^{2k-2l}\Gamma\left(m-l+\frac12\right)}{(2m)!(k-l)!}c_{m,l}
\end{split}
\end{equation}
for $k=0,1,\cdots$. This completes the proof.
\end{proof}

\end{document}